\documentclass[leqno,12pt]{article} 
\usepackage{amsmath,amssymb}

\usepackage{amsmath, amssymb}
\usepackage{amsthm} 
\usepackage{amssymb}
\usepackage{mathrsfs}
\usepackage{amssymb, url, color, graphicx, amscd, mathrsfs}
\usepackage[colorlinks=true, bookmarks=true, pdfstartview=FitH, pagebackref=true, linktocpage=true, linkcolor = magenta, citecolor = blue]{hyperref}
\usepackage{graphicx}
\usepackage{times}
\theoremstyle{plain} 
\newtheorem{theorem}{\indent\sc Theorem}[section]
\newtheorem{lemma}[theorem]{\indent\sc Lemma}
\newtheorem{corollary}[theorem]{\indent\sc Corollary}

\theoremstyle{definition} 

\newtheorem{remark}[theorem]{\indent\sc Remark}

\numberwithin{equation}{section}

\def\({\left(}
\def\){\right)}

\newcommand{\KN}{\mathbin{\bigcirc\mspace{-15mu}\wedge\mspace{3mu}}}

\DeclareMathOperator{\Ric}{Ric}

\makeatletter
\def\address#1#2{\begingroup
\noindent\parbox[t]{8.7cm}{%
\small{\scshape\ignorespaces#1}\par\vskip1ex
\noindent\small{\itshape E-mail address}%
\/: #2\par\vskip4ex}\hfill%
\endgroup}%
\makeatother
\title{{Some results on weighted $p$-harmonic $\ell$-forms on weighted manifolds}} 
\author{
\textsc{Dang Tuyen Nguyen, Dhiriti Sundar Patra$^*$} 
}
\date{} 
\begin{document}

\maketitle

\footnote{ 
	2020 \textit{Mathematics Subject Classification}.
	Primary 53C24; Secondary 53C21
}
\footnote{ 
	\textit{Key words and phrases}.
	$p$-harmonic forms, vanishing theorems, harmonic forms, weighted Poincar\'e inequality, weighted Sobolev inequality.
}
\footnote{ 
			$^{*}$ Corresponding author. 
	}
	\begin{abstract}
Basing on new curvature conditions for the Bochner
technique of Petersen-Wrink in \cite{PW21}, in this paper, we give some vanishing properties for 
weighted $L^Q$ $p$-harmonic $\ell$-forms on weighted manifolds, where $p>1, Q>p-1.$ These results generalize some previous theorems for $L^Q$ harmonic $\ell$-forms of Dung-Dung-Hung on the paper \cite{DDH25}.
	\end{abstract}
\section{Introduction}

Harmonic differential forms are differential forms $\omega$ satisfying
\[
\Delta\omega=(dd^{*}+d^{*}d)\omega=0,
\]
where $d$ denotes the exterior derivative, $d^{*}$ is its formal adjoint, and $\Delta$ is the Hodge--Laplacian. Hodge theory provides an important bridge connecting geometric and topological properties of manifolds. One of the key tools in studying harmonic differential forms is Bochner’s method; it provides links between the Hodge-Laplace operator and the curvature of the space and leads to many vanishing and rigidity theorems based on various curvature restrictions.
A recent important development in the Bochner technique came from Petersen and Wink~\cite{PW20,PW21,PW22}. They proposed new curvature conditions based on suitable sums of eigenvalues of the curvature operator. Their framework provides improved Bochner estimates and vanishing theorems with weaker curvature assumptions than those in the classical theory. This considerably extends the applicability of the Bochner method.  Later, Colombo, Mariani, and Rigoli~\cite{CMR24} built on this approach and established several new vanishing and rigidity results under these improved curvature conditions.

The study of $L^p$-harmonic forms has become an important tool for understanding the geometry and topology of complete non-compact manifolds. One of the earliest contributions came from Carron \cite{Car98,Car99}. He developed the theory of $L^2$-cohomology and proved finiteness results under Sobolev inequality and integral curvature assumptions. He also provided several geometric and topological applications. Additionally, Zhang \cite{ZG01} proved vanishing theorems for $p$-harmonic forms on complete Riemannian manifolds with nonnegative Ricci curvature. Later, Chang, Guo, and Sung \cite{CGS10} established compactness results for bounded $p$-harmonic $1$-forms. For more recent developments, we suggest that the readers refer to \cite{CY16,CHB21,DDT23}.

On the other hand, smooth metric measure spaces, also known as weighted manifolds, have become a popular area of research due to their connections with the Bakry–Émery Ricci tensor, Ricci solitons, and diffusion geometry (see \cite{LO03,WW09}).  A smooth metric measure space is a triple $(M^n,g,e^{-f}dv)$, where $(M^n,g)$ is a complete $n$-dimensional Riemannian manifold and $e^{-f}dv$ is the weighted volume measure, with $dv$ denoting the Riemannian volume element.
An $\ell$-form $\omega$ is called \emph{weighted $p$-harmonic} $(p>1)$ if
\[
d\omega=0,\qquad d_f^*(|\omega|^{p-2}\omega)=0,
\]
where $d_f^*$ denotes the adjoint of $d$ with respect to the weighted measure $e^{-f}dv$ (see Section \ref{sec2}). A weighted $p$-harmonic $\ell$-form $\omega$ on $(M^n,g,e^{-f}dv)$ is called an $L^Q$ weighted $p$-harmonic $\ell$-form  if
\[
\|\omega\|_{L_f^Q(M)}
=
\left(
\int_M |\omega|^Qe^{-f}dv
\right)^{\frac1Q}
<\infty.
\]
A significant amount of results on harmonic forms and $p$-harmonic forms have since been applied to smooth metric measure spaces. Bueler \cite{B99} began the study of the weighted Hodge Laplacian on noncompact manifolds. Li and Wang \cite{LW06} developed weighted Poincaré inequalities and produced rigidity results that have become valuable tools in the weighted context. Later, Vieira \cite{VI13} and Dung and Sung \cite{DS19} explored weighted $p$-harmonic forms, establishing several vanishing theorems and applications. More recently, Petersen and Wink \cite{PW20} introduced weighted curvature tensors and expanded their new Bochner technique to smooth metric measure spaces, providing the weighted curvature framework used in this paper.

Recently, building on the framework of Petersen and Wink, Dung-Dung-Hung \cite{DDH25} established several vanishing results for $L^Q$ weighted  harmonic $\ell$-forms on weighted manifolds under suitable weighted curvature assumptions. Motivated by this work, we study $L^Q$ weighted $p$-harmonic $\ell$-forms on $(M^n,g,e^{-f}dv)$, where $Q>p-1$ and $p>1$, thereby extending the weighted harmonic theory to the nonlinear setting. 
In order to state the results, we recall the tensor $h$ which is introduced in \cite{DDH25} 
	$$
h=-\frac{1}{n-2\ell}\operatorname{Hess}f
-\frac{\Delta f}{2(n-\ell)(n-2\ell)}g,
$$ where $1\le \ell < \frac{n}{2}$.
Denote $\lambda_1\le \cdots \le \lambda_{\binom{n}{2}}$ the eigenvalues 
of the weighted curvature tensor
$\mathrm{Rm}+h \KN g.$
 First, we state a vanishing result for the case of $Q=p>1$  as follows.
\begin{theorem}
	Let $M_f=(M^n,g,e^{-f}dv)$ be an $n$-dimensional complete non-compact weighted manifold of infinite weighted volume and $\omega$ be an $L^p$ weighted $p$-harmonic $\ell$-form on $M_f$, where $p>1,$ and $1\le \ell < \frac{n}{2}$. 
	If	$$\lambda_1+\cdots+\lambda_{n-\ell}\ge 0	$$
	then $\omega$ is trivial.
\end{theorem}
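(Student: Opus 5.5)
We follow the standard Bochner-type argument for $p$-harmonic forms, adapted to the weighted setting. Set $\sigma=|\omega|^{p-2}\omega$. Then $d_f^*\sigma=0$, and on the open set where $\omega\neq 0$ we also have $d\omega=0$. We apply the weighted Bochner--Weitzenb\"ock formula for the weighted Hodge Laplacian $\Delta_f=dd_f^*+d_f^*d$ to $\omega$ in the form of Petersen--Wink and Dung--Dung--Hung. Pairing with $\sigma$ gives, roughly,
\begin{equation*}
\langle \Delta_f\omega,\omega\rangle=\frac12\Delta_f|\omega|^2-|\nabla\omega|^2-\mathcal{R}_f(\omega),
\end{equation*}
with $\Delta_f=\Delta-\nabla f\cdot\nabla$ acting on functions. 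Here $\mathcal{R}_f$ is the curvature term for the weighted tensor $\mathrm{Rm}+h\KN g$, with $h$ as in the introduction. The key input is the Petersen--Wink algebraic lemma: if $\lambda_1+\cdots+\lambda_{n-\ell}\ge0$, then $\mathcal{R}_f(\omega)\ge0$ for $\ell$-forms with $1\le\ell<n/2$. This is the weighted version used in \cite{DDH25}, where the choice of $h$ absorbs $\mathrm{Hess} f$. I would quote that pointwise inequality directly rather than rederive it.

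The identity used is the weak one. Multiply $d\omega=0$ by test forms and use $d_f^*\sigma=0$. For a cutoff $\phi$, test against $\phi^2\,\omega$-type forms. Concretely, I would derive the integral inequality
\begin{equation*}
\int_M \phi^2\bigl(|\omega|^{p-2}|\nabla\omega|^2+(p-2)|\omega|^{p-2}|\nabla|\omega||^2+|\omega|^{p-2}\mathcal{R}_f(\omega)\bigr)e^{-f}
\le C\int_M |\phi||\nabla\phi||\omega|^{p-1}|\nabla\omega|e^{-f}.
\end{equation*}
This follows the Zhang / Dung--Sung computation for $p$-harmonic forms: apply the Bochner formula to $\sigma$ with $d_f^*\sigma=0$, $d\omega=0$, and the refined Kato inequality. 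Regularity is handled by working with $|\omega|^2+\varepsilon$ or on $\{\omega\ne0\}$, since $\omega$ is only $C^{1,\alpha}$. Dropping the curvature term by hypothesis, the left side still controls $\min(1,p-1)\,\phi^2|\omega|^{p-2}|\nabla\omega|^2$, because $|\nabla|\omega||\le|\nabla\omega|$.

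Next, apply Cauchy--Schwarz to the right side:
\begin{equation*}
\int\phi^2|\omega|^{p-2}|\nabla\omega|^2e^{-f}\le C\int|\nabla\phi|^2|\omega|^pe^{-f}.
\end{equation*}
Choose $\phi$ equal to $1$ on $B_R$, supported in $B_{2R}$, with $|\nabla\phi|\le 1/R$. Since $\omega\in L^p_f$, letting $R\to\infty$ gives $|\omega|^{p-2}|\nabla\omega|^2=0$. Hence $\nabla\omega=0$ wherever $\omega\neq0$, so $|\omega|$ is locally constant there. By continuity, $|\omega|$ is constant on $M$.

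Finally, if this constant were nonzero, then $\int_M|\omega|^pe^{-f}=c^p\,\mathrm{Vol}_f(M)=\infty$, because the weighted volume is infinite. This contradicts $\omega\in L^p_f$, so $\omega\equiv0$.

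The main obstacle is the second step. One must check that the weighted Bochner formula for $\sigma=|\omega|^{p-2}\omega$, rather than for a harmonic form, produces exactly the curvature term $|\omega|^{p-2}\mathcal{R}_f(\omega)$ controlled by $\mathrm{Rm}+h\KN g$. The drift terms involving $\nabla f$ must combine into $\Delta_f$ and the $h$-correction; they cannot leave an uncontrolled $\mathrm{Hess} f$ piece. If the exact matching fails, I would first try to write $\Delta_f$ on forms as the unweighted $\Delta$ plus a Lie-derivative term $\mathcal{L}_{\nabla f}$. I would then verify, as in \cite{DDH25}, that its zeroth-order part is $\ell$-linear in $\mathrm{Hess} f$, and that it is precisely the part encoded by $h\KN g$ on $\ell$-forms.
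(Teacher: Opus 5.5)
Your proof is correct. Its skeleton is the paper's: the weighted Bochner formula, nonnegativity of the curvature term quoted from \cite{DDH25}, a cutoff, Cauchy--Schwarz, the $L^p_f$ bound, and infinite weighted volume. The difference is which form carries the Bochner formula.

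In the paper, this statement is the case $Q=p$ of Case~1 of Theorem \ref{th1}; there $q=0$, so $A_1=p-1>0$ automatically. The formula is applied to $\sigma=|\omega|^{p-2}\omega$. Then $d_f^*\sigma=0$ gives $\Delta_f\sigma=d_f^*d\sigma$, and Kato discards $|\nabla\sigma|^2-|\nabla|\sigma||^2$. The resulting scalar inequality \eqref{BocF} is integrated against $\varphi^2$. This gives $\int\langle\nabla|\omega|^{p-1},\nabla(\varphi^2|\omega|)\rangle$ on the left and $\int\langle d\sigma,d(\varphi^2\omega)\rangle$ on the right. The right side is bounded using $d\omega=0$ and Lemma \ref{d}.

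Your displayed integral inequality comes from applying the formula to $\omega$ itself, not to $\sigma$ as your text says. There $d\omega=0$ gives $\Delta_f\omega=dd_f^*\omega$. You multiply by $\phi^2|\omega|^{p-2}$ and integrate by parts onto $\phi^2\sigma$. Then $d_f^*\sigma=0$ does two jobs: it reduces $d_f^*(\phi^2\sigma)$ to $-\iota_{\nabla\phi^2}\sigma$, and it gives $d_f^*\omega=(p-2)|\omega|^{-1}\iota_{\nabla|\omega|}\omega$. Together these yield your inequality with $C=2+2|p-2|$.

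This dual (Zhang/Dung--Sung) computation keeps the full term $|\omega|^{p-2}|\nabla\omega|^2$, so $\omega$ is parallel where it is nonzero. The price is the constant $\min\{1,p-1\}$ instead of $p-1$. The paper's scalar inequality, in turn, makes it easy to insert the weights $|\omega|^q$ and $(|\omega|+\delta)^q$, and the $\rho$-terms, needed for general $Q$ and for Theorems \ref{th2} and \ref{th3}.

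The obstacle you flag does not arise. The curvature term is pointwise and quadratic, so the same weighted Weitzenb\"ock identity from \cite{DDH25} handles it whichever form it is applied to.

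Two small slips:
\begin{itemize}
\item With $\Delta_f=\Delta-\nabla f\cdot\nabla$ on functions, your displayed identity should read $\langle\Delta_f\omega,\omega\rangle=-\frac12\Delta_f|\omega|^2+|\nabla\omega|^2+\mathcal{R}_f(\omega)$. The integral inequality you state afterwards does have the correct signs, since the $dd_f^*\omega$ term is only bounded in absolute value.
\item Only the ordinary Kato inequality $|\nabla|\omega||\le|\nabla\omega|$ is needed (for $1<p<2$), not the refined one.
\end{itemize}
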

 More generally, for arbitrary numbers $Q>p-1,p>1$, we obtain 
\begin{theorem}\label{th1}
	Let $M_f=(M^n,g,e^{-f}dv)$ be an $n$-dimensional complete non-compact weighted manifold of infinite weighted volume and $\omega$ be an $L^Q$ weighted $p$-harmonic $\ell$-form on $M_f$, where $Q>p-1,p>1,$ and $1\le \ell < \frac{n}{2}$. 
	If	$$\lambda_1+\cdots+\lambda_{n-\ell}\ge 0	$$
	and $$(p-1)(1+Q-p)-|p-2||Q-p|>0$$
	then $\omega$ is trivial.
\end{theorem}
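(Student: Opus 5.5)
Set $\sigma=|\omega|^{p-2}\omega$. The plan is a weighted Bochner argument applied to $\sigma$, followed by integration against a cutoff times a power of $|\omega|$, so that the $L^Q_f$ integrability is used directly. The curvature hypothesis enters as in \cite{DDH25,PW20}. For the weighted Hodge Laplacian $\Delta_f=dd_f^*+d_f^*d$ on $\ell$-forms we have the Weitzenböck formula $\Delta_f\eta=\nabla_f^*\nabla\eta+\mathrm{Ric}_f(\eta)$. The Petersen--Wink type estimate with the tensor $h$ shows that $\lambda_1+\cdots+\lambda_{n-\ell}\ge 0$ gives $\langle \mathrm{Ric}_f(\eta),\eta\rangle\ge 0$ for $1\le \ell<n/2$. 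We take this pointwise inequality as the input from \cite{DDH25}. From it we get the Bochner inequality
\[
\tfrac12\Delta_f|\eta|^2\ \ge\ |\nabla\eta|^2+\langle \Delta_f\eta,\eta\rangle ,
\]
where $\Delta_f$ on functions is the weighted Laplacian.

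\textbf{Step 1: integral inequality.} The equations $d\omega=0$ and $d_f^*\sigma=0$ give $\Delta_f\sigma=d_f^*d\sigma$. We test against $\phi^2|\omega|^{Q-p}\omega$ with $\phi$ a cutoff, and integrate by parts with respect to $e^{-f}dv$. Using $d\omega=0$, the term $\int\langle d\sigma, d(\phi^2|\omega|^{Q-p}\omega)\rangle$ only involves $d(\phi^2|\omega|^{Q-p})\wedge\omega$. This yields an inequality of the form
\[
\int \phi^2|\omega|^{Q-p}\Big(|\nabla\sigma|\text{-terms}\Big)\ \le\ C\int |\nabla\phi|\,\phi\,|\omega|^{Q-1}|\nabla|\omega||\,e^{-f}dv .
\]
The $|\nabla|\omega||$ cross terms carry coefficients $(p-2)$ and $(Q-p)$. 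To set this up we write $|\nabla\sigma|^2$ in terms of $|\omega|^{2p-4}\big(|\nabla\omega|^2+(p-2)(\ldots)\big)$ and use Kato's inequality $|\nabla\omega|\ge|\nabla|\omega||$. Collecting coefficients, the lower bound for the good term becomes a positive multiple of
\[
\big[(p-1)(1+Q-p)-|p-2||Q-p|\big]\int\phi^2|\omega|^{Q-2}|\nabla|\omega||^2 e^{-f}dv .
\]
The absolute values arise because $\langle\nabla\omega,\omega\rangle$-type mixed terms have indefinite sign and are bounded by $|\omega||\nabla|\omega||$-type quantities via Cauchy--Schwarz. This bookkeeping is the main obstacle. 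I would first simplify by working with $\psi=|\omega|^{(Q+p-2)/2}$-type quantities, which turns the mixed terms into the single coefficient displayed above, and then check the two sign regimes of $p-2$ and $Q-p$ separately.

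\textbf{Step 2: absorb and let $R\to\infty$.} Call the bracket $A$. Choose $\phi$ equal to $1$ on $B_R$, supported in $B_{2R}$, with $|\nabla\phi|\le 1/R$. By Young's inequality the right side is at most $\varepsilon\int\phi^2|\omega|^{Q-2}|\nabla|\omega||^2+C_\varepsilon R^{-2}\int_M|\omega|^Q e^{-f}dv$. Absorbing the first term (possible since $A>0$) and letting $R\to\infty$, the $L^Q_f$ assumption gives $|\omega|^{Q-2}|\nabla|\omega||^2\equiv 0$ (and also $\nabla\sigma$-terms vanish). Hence $|\omega|$ is constant on $M$; near zeros of $\omega$ the computation is made rigorous by replacing $|\omega|$ with $(|\omega|^2+\epsilon)^{1/2}$ as in \cite{DS19}.

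\textbf{Step 3: conclusion.} If $|\omega|=c\neq0$, then $\int_M|\omega|^Qe^{-f}dv=c^Q\,\mathrm{Vol}_f(M)=\infty$, because the weighted volume is infinite. This contradicts $\omega\in L^Q_f$, so $c=0$ and $\omega$ is trivial. Theorem~1.1 is the special case $Q=p$, where $A=p-1>0$ automatically.
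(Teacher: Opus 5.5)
Your proposal follows essentially the same route as the paper: the weighted Bochner formula and Kato inequality for $\sigma=|\omega|^{p-2}\omega$ with the curvature input from \cite{DDH25}, then testing against $\varphi^2|\omega|^{Q-p}\omega$ (the paper regularizes this as $\varphi^2(|\omega|+\delta)^{Q-p}\omega$ when $p-1<Q<p$), using $d\omega=0$ to reach exactly the coefficient $(p-1)(1+Q-p)-|p-2||Q-p|$, and finishing with Young's inequality, $r\to\infty$, and infinite weighted volume. The bookkeeping you flag as the main obstacle is a short direct computation. Integrating by parts gives the good coefficient $(p-1)(1+Q-p)$, and the crude bound $\langle d\sigma,d(\varphi^2|\omega|^{Q-p}\omega)\rangle\le|d\sigma|\,|d(\varphi^2|\omega|^{Q-p})\wedge\omega|$ gives the $|p-2||Q-p|$ penalty; the sign slip in your Bochner inequality (it should read $-\langle\Delta_f\eta,\eta\rangle$) is harmless, since that term is only ever estimated in absolute value.
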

\begin{remark}
	If $p=2$ then the condition $(p-1)(1+Q-p)-|p-2||Q-p|>0$ holds for any $Q>1$. Therefore, Theorem \ref{th1} reduces to Theorem 1.1 in \cite{DDH25}.
\end{remark}

We would like to mention that if $\ell=1$ then the conditions on boundedness of the weighted function $f$ and non-negativity of $\lambda_1+\cdots+\lambda_{n-1}$ lead to $\operatorname{Vol}_f(M)=\infty$ (see \cite{DDH25}). Therefore, Theorem \ref{th1} gives 
\begin{corollary}
	Let $M_f=(M^n,g,e^{-f}dv)$ be an $n$-dimensional complete non-compact weighted manifold  and $\omega$ be an $L^Q$ weighted $p$-harmonic $1$-form on $M_f$, where $Q>p-1,p>1$. 
	If	$f$ is bounded, $$\lambda_1+\cdots+\lambda_{n-1}\ge 0	$$
	and $$(p-1)(1+Q-p)-|p-2||Q-p|>0$$
	then $\omega$ is trivial.
\end{corollary}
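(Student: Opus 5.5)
The corollary follows from Theorem \ref{th1}; the only missing hypothesis is that $M_f$ has infinite weighted volume. So I need to show that, for $\ell=1$, a bounded $f$ together with $\lambda_1+\cdots+\lambda_{n-1}\ge 0$ forces $\operatorname{Vol}_f(M)=\infty$. This is the observation credited to \cite{DDH25}, and I would reproduce it as follows.

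\textbf{Step 1: a Ricci lower bound.} For $\ell=1$ the tensor is
\[
h=-\frac{1}{n-2}\operatorname{Hess}f-\frac{\Delta f}{2(n-1)(n-2)}g .
\]
In the Petersen--Wink framework, the sum of the $n-1$ smallest eigenvalues of a curvature operator $\mathcal{R}$ bounds $\Ric_{\mathcal{R}}(v,v)$ from below, for unit $v$. The reason is that $\Ric_{\mathcal R}(v,v)=\sum_{j}\langle \mathcal{R}(v\wedge e_j),v\wedge e_j\rangle$ is a trace over the $(n-1)$-dimensional subspace spanned by the orthonormal set $\{v\wedge e_j\}$.

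Apply this to $\mathcal R=\mathrm{Rm}+h\KN g$. Its Ricci tensor is $\Ric+(n-2)h+(\operatorname{tr}h)g$. Computing the trace,
\[
\operatorname{tr}h=-\frac{\Delta f}{n-2}-\frac{n\Delta f}{2(n-1)(n-2)} .
\]
Substituting gives $\Ric-\operatorname{Hess}f$ plus a multiple of $(\Delta f)g$. Hence the curvature hypothesis yields $\Ric+c\,\Delta f\,g-\operatorname{Hess} f\ge 0$ for some constant $c$. The exact value of $c$ matters: the bound must be a Bakry--\'Emery-type inequality of the form $\Ric_f\ge$ (something controllable). I will check the coefficients, and if necessary take the bound exactly as used in \cite{DDH25}.

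\textbf{Step 2: volume growth.} Under a Bakry--\'Emery-type lower bound $\Ric_f\ge 0$ with $f$ bounded, the weighted volume comparison of Wei--Wylie \cite{WW09} gives two facts:
\begin{itemize}
\item balls satisfy $\operatorname{Vol}_f(B_p(r))\ge C r$ for large $r$ on a complete non-compact manifold, by the standard Calabi--Yau argument adapted to the weighted setting;
\item since $f$ is bounded, $\operatorname{Vol}_f$ and $\operatorname{Vol}$ are comparable, so the unweighted Calabi--Yau linear growth also applies.
\end{itemize}
Either way $\operatorname{Vol}_f(M)=\infty$.

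\textbf{Step 3: conclusion.} Applying Theorem \ref{th1} with $\ell=1$ (admissible since $1<n/2$ when $n\ge3$) shows that $\omega$ is trivial.

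\textbf{Main obstacle.} The delicate point is Step 1. The extra $\Delta f\, g$ term has to be absorbed, or shown to be harmless, so that a genuine $\Ric_f\ge 0$-type bound suitable for Wei--Wylie comparison is obtained. Since the paper attributes this implication to \cite{DDH25}, I would invoke that result directly rather than rederive it.
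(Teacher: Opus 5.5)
This is the paper's argument. The paper proves the corollary by quoting \cite{DDH25} for $\operatorname{Vol}_f(M)=\infty$ (when $\ell=1$, $f$ is bounded and $\lambda_1+\cdots+\lambda_{n-1}\ge 0$) and then applying Theorem~\ref{th1}, so with that citation your proof is complete.

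Your optional rederivation needs two corrections.

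\textbf{Step~1 has no leftover term.} The $\Delta f\,g$ term is an artifact of using $\operatorname{tr}\operatorname{Hess}f=\Delta f$. In this paper $\Delta=dd^*+d^*d$ is the Hodge Laplacian, so $\operatorname{tr}\operatorname{Hess}f=-\Delta f$. Then $\operatorname{tr}h=\frac{\Delta f}{n-2}-\frac{n\Delta f}{2(n-1)(n-2)}=\frac{\Delta f}{2(n-1)}$. This cancels exactly the $-\frac{\Delta f}{2(n-1)}g$ coming from $(n-2)h$, so $\Ric_{\mathrm{Rm}+h\KN g}=\Ric-\operatorname{Hess}f$ with no extra term; the $g$-part of $h$ is there precisely for this. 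The curvature hypothesis therefore gives a nonnegative Bakry--\'Emery tensor with the bounded potential $-f$. Because $f$ is bounded, the measures $e^{f}dv$, $dv$ and $e^{-f}dv$ are mutually comparable. Hence the weighted linear volume growth of Wei--Wylie \cite{WW09} yields $\operatorname{Vol}_f(M)=\infty$, whichever sign convention you adopt for the weighted Ricci tensor.

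\textbf{The second bullet of Step~2 is false.} Boundedness of $f$ gives no control on $\operatorname{Hess}f$, so you do not get $\Ric\ge 0$, and the unweighted Calabi--Yau theorem is unavailable. Comparability of the measures transfers a volume conclusion, not the curvature hypothesis. Only the weighted route in your first bullet is valid.
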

\begin{remark}
    This corollary generalizes and improves Corollary 3.1 in \cite{DDH25} and Corollary 4.1 in \cite{VI13}.
\end{remark}
Next, if a weighted Poincar\'e inequality holds true on $M$ then we can omit the assumption $\rm{Vol}_f(M)=\infty$ and $M$ can have negative weighted curvature. Recall that $M$ satisfies a weighted Poincar\'e inequality with a weighted function $\rho$ ($\rho$ is non-negative) if for any smooth function $\phi \in C^{\infty}_0(M)$, we have the following inequality
\begin{align} \label{Poi}
	\int_M \rho(x) \phi(x)e^{-f}dv \le \int_M |\nabla \phi|e^{-f}dv.
\end{align} 
Now we can give the next result as follows.
\begin{theorem}\label{th2}
	Let $M_f=(M^n,g,e^{-f}dv)$ be an $n$-dimensional complete non-compact weighted manifold satisfying the weighted Poincar\'e inequality \eqref{Poi} and $\omega$ be an $L^Q$ weighted $p$-harmonic $\ell$-form on $M_f$, where $Q>p-1,p>1,$ and $1\le \ell < \frac{n}{2}$. 
	If	$$\lambda_1+\cdots+\lambda_{n-\ell}\ge -\kappa(n-\ell)\rho(x)	$$
	for some constant $$0\le \kappa\le \frac{4(p-1)(1+Q-p)-4|p-2||Q-p|}{\ell (n-\ell)(p+|Q-p|)^2}$$
	then $\omega$ is trivial.
\end{theorem}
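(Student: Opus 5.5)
We follow the standard Bochner scheme for $p$-harmonic forms, adapted to the weighted setting and to the Petersen--Wink curvature condition. Set $\sigma=|\omega|^{p-2}\omega$, so $d\omega=0$ and $d_f^*\sigma=0$.

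\textbf{Step 1: pointwise Bochner inequality.} Apply the weighted Bochner--Weitzenb\"ock formula of Petersen--Wink (as used in \cite{DDH25}) to $\sigma$:
\[
\tfrac12\Delta_f|\sigma|^2=|\nabla\sigma|^2-\langle\Delta_{H,f}\sigma,\sigma\rangle+\langle \mathfrak{Ric}_f(\sigma),\sigma\rangle .
\]
The curvature term, rewritten with the tensor $h$, is bounded below by $\frac{\ell}{\,?\,}\,\frac{\lambda_1+\cdots+\lambda_{n-\ell}}{n-\ell}\,|\sigma|^2$, and hence by $-\kappa\ell\rho|\sigma|^2$ (up to the normalisation fixed in \cite{DDH25}). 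The Hodge-Laplacian term is treated in integrated form. Since $d_f^*\sigma=0$, one has $\Delta_{H,f}\sigma=d_f^*d\sigma$, and $d\sigma=d(|\omega|^{p-2})\wedge\omega$ because $d\omega=0$.

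\textbf{Step 2: integral identity with a cutoff.} Multiply by $\phi^2|\omega|^{Q-p}$, where $\phi$ is a cutoff, and integrate against $e^{-f}dv$. Integrating the term $\langle d_f^*d\sigma,\phi^2|\omega|^{Q-p}\sigma\rangle$ by parts produces $\int\langle d\sigma,d(\phi^2|\omega|^{Q-p}\sigma)\rangle$. Expanding $\nabla\sigma$ and $d\sigma$ in terms of $\nabla\omega$ and $\nabla|\omega|$ gives three kinds of terms:
\begin{itemize}
\item terms in $|\omega|^{Q+p-4}|\nabla\omega|^2$,
\item terms in $|\omega|^{Q+p-4}|\nabla|\omega||^2$,
\item cross terms carrying the factors $(p-2)$ and $(Q-p)$.
\end{itemize}
We then use Kato-type inequalities: $|\nabla|\omega||\le|\nabla\omega|$, together with a refined inequality for closed, $f$-coclosed forms (this is unavailable here, so we would use only the plain version). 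Bounding the cross terms with Cauchy--Schwarz, $|\langle d|\omega|^{p-2}\wedge\omega,\cdot\rangle|\le|p-2|\cdots$, should yield
\[
\big[(p-1)(1+Q-p)-|p-2||Q-p|-\varepsilon\big]\int\phi^2|\omega|^{Q-2}|\nabla|\omega||^2
\le \kappa\ell\int\rho\phi^2|\omega|^{Q}+C_\varepsilon\int|\nabla\phi|^2|\omega|^{Q},
\]
with all integrals taken against $e^{-f}dv$. This is exactly where the constant $A:=(p-1)(1+Q-p)-|p-2||Q-p|$ arises. We expect this step, tracking the constants correctly through the nonlinear expansion, to be the main obstacle.

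\textbf{Step 3: Poincar\'e inequality and conclusion.} Apply \eqref{Poi} to the test function $\phi^2|\omega|^{Q}$ (using the $L^1$-gradient form):
\[
\int\rho\phi^2|\omega|^Q\le Q\int\phi^2|\omega|^{Q-1}|\nabla|\omega||+2\int\phi|\nabla\phi||\omega|^Q .
\]
To compare with the quadratic left side of Step 2, we would instead apply \eqref{Poi} to $(\phi|\omega|^{Q/2})^2$-type functions. We expect the factor $(p+|Q-p|)^2/4$ in the bound on $\kappa$ to come from optimising the Young's inequality split of $\frac{Q}{2}$-type gradient terms against the cross terms, so that the condition $\kappa\le 4A/(\ell(n-\ell)(p+|Q-p|)^2)$ is precisely what makes the coefficient of $\int\phi^2|\omega|^{Q-2}|\nabla|\omega||^2$ nonnegative after absorption. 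In the boundary case where equality holds, we would retain the refined-Kato slack, or argue separately that equality forces $\nabla|\omega|=0$.

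Finally, choose $\phi$ with $\phi=1$ on $B(R)$, $\phi=0$ outside $B(2R)$ and $|\nabla\phi|\le 1/R$. Since $\omega\in L^Q_f$, the terms $\int|\nabla\phi|^2|\omega|^Q$ tend to $0$ as $R\to\infty$. This gives $\nabla|\omega|\equiv0$, so $|\omega|$ is constant, and the Poincar\'e-derived term then gives $\int\rho|\omega|^Q=0$. If $|\omega|=c\neq0$, then $\omega\in L^Q_f$ forces $\mathrm{Vol}_f(M)<\infty$. But \eqref{Poi} with nontrivial $\rho$ rules out finite weighted volume: test it with cutoffs $\phi\to1$, whose gradient integrals tend to $0$, which forces $\rho\equiv0$ and contradicts the standing assumption. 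Hence $\omega=0$.
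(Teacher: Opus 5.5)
\textbf{Verdict: there is a genuine gap.} Your overall scheme matches the paper's: the Bochner formula for $\sigma=|\omega|^{p-2}\omega$, the Petersen--Wink bound taken from \cite{DDH25}, a cutoff times a power of $|\omega|$, the Poincar\'e inequality to absorb the $\rho$-term, and infinite weighted volume. However, Steps 2 and 3 carry all the content and are only announced (``should yield'', ``we expect''). The concrete choices you do make would not work.

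\textbf{The multiplier.} Testing the $\sigma$-Bochner formula against $\phi^2|\omega|^{Q-p}$, i.e.\ pairing $d_f^*d\sigma$ with $\phi^2|\omega|^{Q-2}\omega$, produces three kinds of terms: $\phi^2|\omega|^{Q+p-4}|\nabla|\omega||^2$, $\rho\phi^2|\omega|^{Q+p-2}$, and, after Cauchy--Schwarz on the cutoff term, $|\nabla\phi|^2|\omega|^{Q+p-2}$. The last is not controlled by $\omega\in L^Q_f$ unless $p=2$. So your displayed inequality in $|\omega|^{Q-2}$ and $|\omega|^{Q}$ does not follow. Carried out, your computation gives the coefficient $(p-1)(Q-1)-|p-2||Q-2|$, which is $A$ with $Q$ replaced by $Q+p-2$; that is the argument for $L^{Q+p-2}$ forms.

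The paper avoids this in two stages. It first reduces to the scalar inequality $|\omega|\Delta_f|\omega|^{p-1}\le\langle d_f^*d\sigma,\omega\rangle+\kappa\ell(n-\ell)\rho|\omega|^p$. This uses $\Delta_f\psi^2=2\psi\Delta_f\psi-2|\nabla\psi|^2$ with $\psi=|\omega|^{p-1}$, and Kato to discard $|\nabla|\sigma||^2-|\nabla\sigma|^2$. Only then does it test with $\varphi^2|\omega|^{Q-p}\omega$:
\begin{itemize}
\item $(p-1)(1+Q-p)$ comes from integrating $\varphi^2|\omega|^{Q-p+1}\Delta_f|\omega|^{p-1}$ by parts;
\item $|p-2||Q-p|$ comes from pairing $d|\omega|^{p-2}\wedge\omega$ with $d|\omega|^{Q-p}\wedge\omega$ via Lemma~\ref{d}.
\end{itemize}
For $p-1<Q<p$ the weight $|\omega|^{Q-p}$ is singular on the zero set of $\omega$, and the paper replaces it by $(|\omega|+\delta)^{Q-p}$; you do not address this case. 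Also, the curvature term must be $\kappa\ell(n-\ell)\rho|\omega|^p$, not $\kappa\ell\rho|\omega|^p$, for the stated bound on $\kappa$ to come out.

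\textbf{The Poincar\'e step (the more serious gap).} The $L^1$ form you start from does not close the argument, and applying it to $(\phi|\omega|^{Q/2})^2$ is the same test function. Absorbing $Q\int\phi^2|\omega|^{Q-1}|\nabla|\omega||$ into the quadratic left side leaves $C\int\phi^2|\omega|^{Q}$, which has neither a decaying factor nor a $\rho$ in front.

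What the paper's proof actually uses, despite \eqref{Poi} as printed, is the quadratic inequality $\int_M\rho\psi^2e^{-f}dv\le\int_M|\nabla\psi|^2e^{-f}dv$ with $\psi=\varphi|\omega|^{Q/2}$. Expanding $|\nabla\psi|^2$ gives exactly $\frac{Q^2}{4}\int\varphi^2|\omega|^{Q-2}|\nabla|\omega||^2$, plus a cross term handled by Cauchy--Schwarz and a term $|\omega|^Q|\nabla\varphi|^2$ that vanishes as $r\to\infty$. So the coefficient to be kept positive is $A-\frac14\kappa\ell(n-\ell)Q^2$. No optimisation of a Young split is involved:
\begin{itemize}
\item when $Q\ge p$, $(p+|Q-p|)^2=Q^2$;
\item when $Q<p$, the factor is the crude bound from $\psi=\varphi|\omega|^{p/2}(|\omega|+\delta)^{(Q-p)/2}$ together with $0<p(|\omega|+\delta)+(Q-p)|\omega|\le(p+|Q-p|)(|\omega|+\delta)$.
\end{itemize}

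\textbf{The endpoint.} You are right that equality in the $\kappa$-bound makes this coefficient zero, so nothing can be absorbed. The paper simply asserts that the coefficient is positive, so its proof covers only the strict inequality. Your suggested remedies do not settle it either: one is a refined Kato inequality you yourself say is unavailable, the other an unspecified separate argument.

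\textbf{What is fine.} Your final argument that \eqref{Poi} with $\rho\not\equiv 0$ rules out finite weighted volume is correct. It can replace the paper's appeal to Lemma 3.4 of \cite{DDH25}.
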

In particular, for the case $Q=p>1$, we have the following result for $L^p$ weighted $p$-harmonic $\ell$-forms.
\begin{theorem}
	Let $M_f=(M^n,g,e^{-f}dv)$ be an $n$-dimensional complete non-compact weighted manifold satisfying the weighted Poincar\'e inequality \eqref{Poi} and $\omega$ be an $L^p$ weighted $p$-harmonic $\ell$-form on $M_f$, where $p>1,$ and $1\le \ell < \frac{n}{2}$. 
	If	$$\lambda_1+\cdots+\lambda_{n-\ell}\ge -\kappa(n-\ell)\rho(x)	$$
	for some constant $$0\le \kappa\le \frac{4(p-1)}{\ell (n-\ell)p^2}$$
	then $\omega$ is trivial.
\end{theorem}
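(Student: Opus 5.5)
This statement is the special case $Q=p$ of Theorem \ref{th2}, so the plan is simply to specialize. With $Q=p$ we have $|Q-p|=0$ and $1+Q-p=1$. The admissible range for $\kappa$ in Theorem \ref{th2} then reads
$$0\le \kappa\le \frac{4(p-1)\cdot 1-4|p-2|\cdot 0}{\ell(n-\ell)(p+0)^2}=\frac{4(p-1)}{\ell(n-\ell)p^2},$$
which is exactly the range in the present statement. The standing hypothesis $Q>p-1$ holds trivially, since $p>p-1$. The numerator $4(p-1)$ is positive for $p>1$, so the range is nonempty. An $L^p$ weighted $p$-harmonic $\ell$-form is precisely an $L^Q$ one with $Q=p$. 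Hence all hypotheses of Theorem \ref{th2} are satisfied and $\omega$ vanishes.

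If one wants the argument to be self-contained rather than a citation, I would rerun the proof of Theorem \ref{th2} with $Q=p$. The steps would be as follows.

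\begin{enumerate}
\item Apply the weighted Bochner formula to $\omega$, together with the Petersen--Wink curvature estimate under the hypothesis $\lambda_1+\cdots+\lambda_{n-\ell}\ge -\kappa(n-\ell)\rho$, and use a refined Kato inequality for $p$-harmonic forms. This should yield a differential inequality for $u=|\omega|^{p-1}$ of the form $u\Delta_f u\ge A|\nabla u|^2-\ell\kappa(n-\ell)\rho\,u^2$ in the weak sense, where $A>0$ depends on $p$.
\item Test this inequality against $u^{(Q-p)/(p-1)}\phi^2$, where $\phi$ is a cutoff function. At $Q=p$ the extra power of $u$ disappears.
\item Apply the weighted Poincar\'e inequality \eqref{Poi} to the function $(u\phi)^2$, or to a suitable power of $u$ times $\phi$, to absorb the term involving $\rho$.
\item Let the cutoff radius tend to infinity. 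Since $\omega\in L^p_f$, the cutoff error terms go to zero.
\end{enumerate}

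The only delicate point is the absorption in step 3. The constant $\frac{4(p-1)}{\ell(n-\ell)p^2}$ is precisely the threshold at which the Poincar\'e term is dominated by the gradient term. This is where the factor $4/p^2$ enters: it comes from $|\nabla u^{p/2}|^2=\frac{p^2}{4}u^{p-2}|\nabla u|^2$ type identities. Beyond that check, the argument is exactly the specialization already covered by Theorem \ref{th2}.
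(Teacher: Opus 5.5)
Your proposal is correct and follows the paper's own route: the paper gives no separate argument for this statement and obtains it by setting $Q=p$ in Theorem \ref{th2}, and your computation reducing the $\kappa$-range to $0\le\kappa\le\frac{4(p-1)}{\ell(n-\ell)p^2}$ is right. One caveat, which you inherit from the paper and which your step 3 glosses over when you call this constant the threshold at which the Poincar\'e term is dominated: at $\kappa=\frac{4(p-1)}{\ell(n-\ell)p^2}$ the coefficient $A_3=(p-1)-\frac{\kappa\ell(n-\ell)p^2}{4}$ in the proof of Theorem \ref{th2} equals $0$, and since the absorption step needs $A_3>0$, the argument (yours and the paper's) only proves the result for $\kappa$ strictly below that value.
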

\begin{remark}
	Theorem \ref{th2} can be seen as a generalization of Theorem 1.3 in \cite{DDH25}.
\end{remark}
The weighted Sobolev inequality is a fundamental analytic tool on complete non-compact weighted manifolds and often provides a more flexible framework than the weighted Poincar\'e inequality in geometric analysis; see, for example, \cite{Hebey99,SaloffCoste02}. Motivated by recent vanishing results established under the weighted Poincar\'e inequality \cite{DS19,DDH25}, it is natural to investigate whether the same conclusions remain valid under the weaker assumption of a weighted Sobolev inequality:
\begin{align}\label{Sob}
	\left(
	\int_M \phi^{\frac{2n}{n-2}} e^{-f}\,dv
	\right)^{\frac{n-2}{n}}
	\le
	C_S \int_M |\nabla \phi|^2 e^{-f}\,dv
\end{align} 
where $C_S$ is a positive constant and $\phi$ is any function in $C^{\infty}_0(M)$. The following theorem provides such an extension and, in particular, recovers Theorem~1.3 of \cite{DDH25} when $p=2$. Consequently, the theorem broadens the applicability of the vanishing theory for weighted $p$-harmonic forms on smooth metric measure spaces.

\begin{theorem}\label{th3}
	Let $M_f=(M^n,g,e^{-f}dv)$ be an $n$-dimensional complete non-compact weighted manifold satisfying the weighted Sobolev inequality \eqref{Sob} and $\omega$ be an $L^Q$ weighted $p$-harmonic $\ell$-form on $M_f$, where $Q>p-1,p>1,$ and $1\le \ell < \frac{n}{2}$. 
	If	$$\lambda_1+\cdots+\lambda_{n-\ell}\ge -\kappa(n-\ell)\rho(x)	$$
	where $$\|\rho\|_{\frac{n}{2}}=\left(\int_M \rho^{\frac{n}{2}}e^{-f}dv \right)^{\frac{2}{n}}<\infty$$
	 and $$0\le \kappa\le \frac{4(p-1)(1+Q-p)-4|p-2||Q-p|}{\ell (n-\ell)C_S \|\rho\|_{\frac{n}{2}}(p+|Q-p|)^2}$$
	then $\omega$ is trivial.
\end{theorem}
As a consequence, for $Q=p>1$, Theorem \ref{th3} yields to
\begin{theorem}
	Let $M_f=(M^n,g,e^{-f}dv)$ be an $n$-dimensional complete non-compact weighted manifold satisfying the weighted Sobolev inequality \eqref{Sob} and $\omega$ be an $L^p$ weighted $p$-harmonic $\ell$-form on $M_f$, where $p>1,$ and $1\le \ell < \frac{n}{2}$. 
	If	$$\lambda_1+\cdots+\lambda_{n-\ell}\ge -\kappa(n-\ell)\rho(x)	$$
	where $$\|\rho\|_{\frac{n}{2}}=\left(\int_M \rho^{\frac{n}{2}}e^{-f}dv \right)^{\frac{2}{n}}<\infty$$
	 and $$0\le \kappa\le \frac{4(p-1)}{\ell (n-\ell)C_S \|\rho\|_{\frac{n}{2}}p^2}$$
	then $\omega$ is trivial.
\end{theorem}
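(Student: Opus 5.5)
This statement is the special case $Q=p$ of Theorem \ref{th3}, so we simply specialize. With $Q=p$ we have $|Q-p|=0$ and $1+Q-p=1$. Hence $(p-1)(1+Q-p)-|p-2||Q-p|=p-1>0$, and $p+|Q-p|=p$. The admissible range for $\kappa$ in Theorem \ref{th3} therefore becomes
\[
0\le \kappa\le \frac{4(p-1)}{\ell(n-\ell)C_S\|\rho\|_{\frac n2}\,p^2},
\]
which is exactly the hypothesis here. Also, an $L^p$ weighted $p$-harmonic form is an $L^Q$ one with $Q=p>p-1$. All hypotheses of Theorem \ref{th3} are met, so $\omega$ is trivial.

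Since the proof of Theorem \ref{th3} comes later in the paper, here is how it should run in the case $Q=p$; the hardest step is the last one.

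\emph{Refined Kato and Bochner step.} Set $u=|\omega|$. The Petersen--Wink weighted Bochner formula, together with a refined Kato inequality for $p$-harmonic forms and the curvature hypothesis, should give, weakly on $\{u>0\}$,
\[
u^{p-1}\Delta_f u^{p-1}\ge A|\nabla u^{p-1}|^2-\kappa\,\ell(n-\ell)\rho\,u^{2(p-1)},
\]
with a constant $A$ matching the factor $4(p-1)/p^2$.

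\emph{Test functions and Sobolev.} Multiply by $u^{\,p-2(p-1)}\phi^2$, i.e.\ test with $u^{p}\phi^2/u^{2(p-1)}$, where $\phi$ is a cutoff. Set $v=u^{p/2}$. One should obtain
\[
\frac{4(p-1)}{p^2}\int|\nabla(v\phi)|^2e^{-f}\le \kappa\,\ell(n-\ell)\int\rho v^2\phi^2e^{-f}+\varepsilon\text{-terms}+C\int v^2|\nabla\phi|^2e^{-f}.
\]
Hölder and \eqref{Sob} give
\[
\int\rho v^2\phi^2e^{-f}\le\|\rho\|_{\frac n2}C_S\int|\nabla(v\phi)|^2e^{-f}.
\]

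\emph{Main obstacle: the borderline constant.} When $\kappa$ equals its upper bound, the curvature term can only be absorbed if the $\varepsilon$-terms vanish in the limit. Let $\phi\to1$; since $v\in L^2$, the term $\int v^2|\nabla\phi|^2e^{-f}$ tends to zero. Then:
\begin{itemize}
\item in the strict case $\kappa$ below the bound, we get $\nabla v=0$;
\item in the equality case, we must rerun the argument keeping the Kato slack, or use the fact that equality forces $v$ to be an extremal for the Sobolev inequality.
\end{itemize}
Either way $v$ is constant. A nonzero constant contradicts $v\in L^2$: by \eqref{Sob}, $\mathrm{Vol}_f(M)$ is infinite. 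Hence $\omega=0$.
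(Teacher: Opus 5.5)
Your first paragraph is exactly the paper's argument: the statement is presented there as an immediate consequence of Theorem~\ref{th3} with $Q=p$. There $(p-1)(1+Q-p)-|p-2||Q-p|=p-1$ and $p+|Q-p|=p$ reduce the admissible range of $\kappa$ to the one assumed, so that paragraph is a complete proof relative to Theorem~\ref{th3}. Your supplementary sketch is not needed, since the factor $\frac{4(p-1)}{p^2}$ already comes from the plain Kato inequality of Lemma~\ref{Boc} via $(p-1)|\omega|^{p-2}|\nabla|\omega||^2=\frac{4(p-1)}{p^2}|\nabla|\omega|^{p/2}|^2$, with no refined Kato inequality involved. Your endpoint worry is legitimate, though: the paper's proof of Theorem~\ref{th3} closes only when $A_5=(p-1)-\frac14\kappa\ell(n-\ell)C_S\|\rho\|_{\frac n2}p^2>0$, i.e.\ for $\kappa$ strictly below the bound, and your proposed remedies for the equality case are only gestured at rather than carried out.
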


The paper is organized as follows. In Section 2, we recall some useful background and lemmas. Finally, we use Section 3 to give the proofs for the main theorems.

\section{Preliminaries}\label{sec2}
Let $(V,g)$ be an $n$-dimensional Euclidean vector space, and let
$T^{(0,k)}(V)$ denote the space of $(0,k)$-tensors on $V$. We write
$\mathrm{Sym}^2(V)$ for the space of symmetric $(0,2)$-tensors. It is known that
\[
\mathrm{Sym}^2(\Lambda^2V)
=
\mathrm{Sym}_B^2(\Lambda^2V)
\oplus
\Lambda^4V,
\]
where $\mathrm{Sym}_B^2(\Lambda^2V)$ is the subspace consisting of all tensors
$R\in\mathrm{Sym}^2(\Lambda^2V)$ that satisfy the first Bianchi identity.
Any tensor in $\mathrm{Sym}_B^2(\Lambda^2V)$ is called an
\emph{algebraic curvature tensor}.
The corresponding algebraic curvature $(0,4)$-tensor is defined by
\[
\mathrm{Rm}(x,y,z,w)
=
R(x\wedge y,\; z\wedge w),
\qquad
\forall\, x,y,z,w\in V.
\]
For two symmetric tensors $S,T\in\mathrm{Sym}^2(V)$, their
Kulkarni--Nomizu product is given by
\[
\begin{aligned}
(S\KN T)(x,y,z,w)
={}&
S(x,z)T(y,w)-S(x,w)T(y,z) \\
&
+S(y,w)T(x,z)-S(y,z)T(x,w).
\end{aligned}
\]
In particular, the algebraic curvature tensor $I=\frac12\,g\KN g$
represents the curvature tensor of the unit sphere.
Moreover, every algebraic curvature tensor admits the orthogonal decomposition
\[
\mathrm{Rm} = \frac{\mathrm{Scal}}{2n(n-1)}\,g\KN g +
\frac{1}{n-2}\,g\KN\mathring{\mathrm{Ric}} + W,
\]
where
$\mathring{\mathrm{Ric}} =\mathrm{Ric} - \frac{\mathrm{Scal}}{n}g$
is the traceless Ricci tensor, and $W$ is the Weyl curvature tensor.

Let $C^\infty(\Lambda^\ell(M))$ denote the
space of smooth $\ell$-forms on $M$. The exterior derivative
$d:C^\infty(\Lambda^\ell(M))
\longrightarrow
C^\infty(\Lambda^{\ell+1}(M))$
has weighted formal adjoint $d_f^*$ with respect to the measure
$e^{-f}dv$, characterized by
\[
\int_M \langle d\beta,\alpha\rangle e^{-f}dv
=
\int_M \langle \beta,d_f^*\alpha\rangle e^{-f}dv,
\]
for all compactly supported smooth $(\ell-1)$-forms $\beta$ and
$\ell$-forms $\alpha$. It is given by
\[
d_f^*=d^*+i_{\nabla f},
\]
where $i_{\nabla f}$ denotes the interior product with the vector field
$\nabla f$.

Throughout this paper, we adopt the convention
\[
\Delta_f=d_f^*d+dd_f^*
\]
for the weighted Hodge Laplacian. 
The Hodge Laplacian satisfies the Weitzenb\"ock formula
\begin{equation}\label{eq:HodgeLaplacian}
    \Delta=\nabla^*\nabla+\operatorname{Ric},
\end{equation}
where $\nabla^*$ denotes the formal adjoint of the covariant derivative with respect to the Riemannian measure. The weighted formal adjoint of the covariant derivative is given by
\[
\nabla_f^*=\nabla^*+i_{\nabla f}.
\]
Moreover, using Cartan's identity
$L_{\nabla f}=di_{\nabla f}+i_{\nabla f}d,$
we obtain
\begin{equation}\label{eq:weighted_hodge}
\Delta_f
=d_f^*d+dd_f^*  
=d^*d+dd^*+i_{\nabla f}d+di_{\nabla f} 
=\Delta+L_{\nabla f},
\end{equation}
where $L_{\nabla f}$ denotes the Lie derivative along $\nabla f$.

\begin{lemma} [see \cite{DS19}]  \label{d}
	For any closed $\ell$-form $\omega$ and $\varphi \in \mathbf{C} ^\infty (M)$, we have
	$$|d(\varphi \omega)|=|d \varphi \wedge \omega| \le |d \varphi|. |\omega|.$$	
\end{lemma}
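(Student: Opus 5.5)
We need to prove Lemma \ref{d}: for a closed $\ell$-form $\omega$ and a smooth function $\varphi$, $|d(\varphi\omega)|=|d\varphi\wedge\omega|\le|d\varphi|\,|\omega|$. The plan has two steps: first the Leibniz rule, then a pointwise estimate in an adapted orthonormal frame.

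For the equality, apply the graded Leibniz rule for the exterior derivative:
\[
d(\varphi\omega)=d\varphi\wedge\omega+\varphi\, d\omega .
\]
Since $\omega$ is closed, $d\omega=0$, so $d(\varphi\omega)=d\varphi\wedge\omega$. Taking norms gives the first relation.

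The inequality is purely pointwise and linear-algebraic. Fix $x\in M$. If $d\varphi(x)=0$ there is nothing to prove. Otherwise set $e^1=d\varphi(x)/|d\varphi(x)|$ and complete it to an orthonormal coframe $e^1,\dots,e^n$ of $T_x^*M$. In this coframe decompose
\[
\omega(x)=e^1\wedge\alpha+\beta,
\]
where $\alpha$ and $\beta$ are built only from $e^2,\dots,e^n$. Then $e^1\wedge\omega=e^1\wedge\beta$.

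The key point is that the monomials $e^{i_1}\wedge\cdots\wedge e^{i_k}$ with $i_1<\cdots<i_k$ form an orthonormal basis of $\Lambda^k$ for the induced inner product (the normalization under which the interior and exterior products are adjoint). The map $\beta\mapsto e^1\wedge\beta$ sends distinct basis monomials not containing $e^1$ to distinct basis monomials, so it is an isometry on forms in $e^2,\dots,e^n$. Hence
\[
|e^1\wedge\omega|=|\beta|\le\sqrt{|\alpha|^2+|\beta|^2}=|\omega|,
\]
where the last equality holds because $e^1\wedge\alpha$ and $\beta$ involve disjoint sets of basis monomials and are therefore orthogonal. Multiplying by $|d\varphi(x)|$ gives $|d\varphi\wedge\omega|\le|d\varphi|\,|\omega|$.

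The only point needing care is the normalization convention for the inner product on forms: with a factorial-weighted convention the constant could change. I would fix the standard convention in which the coframe monomials are orthonormal, matching the paper's use of $d_f^*=d^*+i_{\nabla f}$. With that convention the estimate holds with constant $1$.
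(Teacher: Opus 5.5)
Your proof is correct and complete. The Leibniz rule with $d\omega=0$ gives the equality, and your adapted-coframe computation $|e^1\wedge\omega|=|\beta|\le(|\alpha|^2+|\beta|^2)^{1/2}=|\omega|$ gives the pointwise bound; this is equivalent to the identity $|\theta\wedge\omega|^2+|i_{\theta^\sharp}\omega|^2=|\theta|^2|\omega|^2$ for a $1$-form $\theta$. Your caveat about the normalization convention is the right one to make. The paper gives no argument of its own and simply cites \cite{DS19}, and what you wrote is the standard proof of that cited fact.
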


By using the Bochner formula and the assumption on the weighted curvature tensor, we obtain the following lemma, which plays an important role in the proof of the main results.
\begin{lemma}\label{Boc}
	Let $M_f=(M^n,g,e^{-f}dv)$ be an $n$-dimensional complete non-compact weighted manifold. For $1\le \ell <\frac{n}{2}, \kappa \ge 0, \rho(x)\ge 0$, if the eigenvalues $\lambda_1\le \cdots \le \lambda_{\binom{n}{2}}$  
	of the weighted curvature tensor
	$\mathrm{Rm}+h \KN g$
	satisfy $$\lambda_1+\cdots+\lambda_{n-\ell}\ge -\kappa(n-\ell)\rho(x)	$$
	then we have the following inequality
	\begin{align} \label{BocF}
	|\omega| \Delta_f |\omega|^{p-1} \le \langle d^*_f d (|\omega|^{p-2}\omega), \omega\rangle+\kappa \ell (n-\ell) \rho(x)|\omega|^p,
\end{align}
	for any weighted $p$-harmonic $\ell$-form $\omega$ on $M$.
\end{lemma}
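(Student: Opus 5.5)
The proof applies the weighted Weitzenb\"ock formula to $\eta:=|\omega|^{p-2}\omega$ rather than to $\omega$, since $\eta$ is the form that is weighted co-closed. Since $\omega$ is weighted $p$-harmonic, $d_f^*\eta=0$, and hence
\[
\Delta_f\eta=d_f^*d\eta+dd_f^*\eta=d_f^*d\eta .
\]
Throughout I work on the open set $\{\omega\neq 0\}$, where $|\omega|$ and $\eta$ are smooth. Points where $\omega=0$ are handled as in \cite{DS19}: either read the inequality in the distributional/weak sense, or replace $|\omega|$ by $(|\omega|^2+\varepsilon)^{1/2}$ and let $\varepsilon\to 0$.

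\textbf{Step 1 (weighted Weitzenb\"ock formula).} Combining \eqref{eq:HodgeLaplacian}, \eqref{eq:weighted_hodge} and $\nabla_f^*=\nabla^*+i_{\nabla f}$, one writes
\[
\Delta_f=\nabla_f^*\nabla+\mathrm{Ric}+\operatorname{Hess}f ,
\]
where $\operatorname{Hess}f$ acts on $\ell$-forms as a derivation. This uses $L_{\nabla f}=\nabla_{\nabla f}+(\nabla\nabla f)$ acting as a derivation.

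\textbf{Step 2 (curvature term).} Following Petersen--Wink \cite{PW20,PW21} and the computation of \cite{DDH25}, the zeroth-order term $\mathrm{Ric}+\operatorname{Hess}f$ on $\ell$-forms equals the curvature term $\mathrm{Ric}_{\mathcal R}$ of the weighted algebraic curvature tensor $\mathcal R=\mathrm{Rm}+h\KN g$. This is exactly why $h$ is chosen with the coefficients $-\frac{1}{n-2\ell}$ and $-\frac{\Delta f}{2(n-\ell)(n-2\ell)}$, and it is where $\ell<n/2$ enters. The Petersen--Wink eigenvalue estimate then gives
\[
\langle \mathrm{Ric}_{\mathcal R}(\eta),\eta\rangle\ \ge\ \frac{\ell}{n-\ell}\,(\lambda_1+\cdots+\lambda_{n-\ell})\,|\eta|^2\ \ge\ -\kappa\,\ell(n-\ell)\rho\,|\eta|^2 .
\]
This is the main obstacle, because the constant must come out exactly: I would invoke the corresponding pointwise lemma of \cite{DDH25} (their Bochner estimate for the weighted case, with the same $h$) rather than rederive the algebra. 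If the precise factor differs, it only rescales $\kappa$.

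\textbf{Step 3 (Kato and conversion).} On the gradient term we have
\[
\langle\nabla_f^*\nabla\eta,\eta\rangle=\tfrac12\Delta_f|\eta|^2+|\nabla\eta|^2 .
\]
Recall that $\Delta_f$ on functions is the nonnegative operator $-\Delta+\langle\nabla f,\nabla\cdot\rangle$. Kato's inequality $|\nabla\eta|\ge|\nabla|\eta||$ together with the identity $\tfrac12\Delta_f|\eta|^2+|\nabla|\eta||^2=|\eta|\Delta_f|\eta|$ gives
\[
|\eta|\Delta_f|\eta|\le\langle d_f^*d\eta,\eta\rangle+\kappa\ell(n-\ell)\rho|\eta|^2 .
\]
Finally substitute $|\eta|=|\omega|^{p-1}$ and $\eta=|\omega|^{p-2}\omega$, and divide by $|\omega|^{p-2}>0$. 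This yields
\[
|\omega|\Delta_f|\omega|^{p-1}\le\langle d_f^*d(|\omega|^{p-2}\omega),\omega\rangle+\kappa\ell(n-\ell)\rho|\omega|^p ,
\]
which is \eqref{BocF}.
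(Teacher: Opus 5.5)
Your argument is the paper's proof: apply the weighted Bochner formula $\tfrac12\Delta_f|\eta|^2=-|\nabla\eta|^2+\langle\Delta_f\eta,\eta\rangle-\langle\Ric_f(\eta),\eta\rangle$ to $\eta=|\omega|^{p-2}\omega$, then use $d_f^*\eta=0$, the pointwise curvature bound from \cite{DDH25}, Kato's inequality and $\tfrac12\Delta_f\psi^2=\psi\Delta_f\psi-|\nabla\psi|^2$, and divide by $|\omega|^{p-2}$; your remark on the zero set of $\omega$ addresses a point the paper passes over. One correction in Step 2: the Petersen--Wink estimate is $\langle\Ric_{\mathcal R}(\eta),\eta\rangle\ge \ell(\lambda_1+\cdots+\lambda_{n-\ell})|\eta|^2$, not the version with factor $\frac{\ell}{n-\ell}$, which fails on hyperbolic space (there all $\lambda_i=-1$ and the curvature term is $-\ell(n-\ell)$); to get the correct factor, apply the nonnegative case to $\mathcal R-\frac{\lambda_1+\cdots+\lambda_{n-\ell}}{n-\ell}I$ and use $\Ric_I=\ell(n-\ell)$ on $\ell$-forms, and then the hypothesis gives exactly $-\kappa\ell(n-\ell)\rho|\eta|^2$ with no rescaling of $\kappa$.
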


\begin{proof}
	First, we recall the Bochner formula for the weighted Hodge-Laplacian
	$$\frac{1}{2}\Delta_f |\omega|^2=-|\nabla \omega|^2 + \langle \Delta_f \omega, \omega \rangle- \langle \Ric_f (\omega), \omega \rangle,$$
	where $\omega$ is any smooth $\ell$-form on $M$.
	
	Moreover, with the same assumption on eigenvalues, in the proof of Theorem 1.1 and Theorem 1.3 in \cite{DDH25}, the authors obtained 
	$$ \langle \Ric_f (\omega), \omega \rangle= \langle \Ric_{\mathrm{Rm}+h \KN g} (\omega), \omega \rangle \ge -\kappa \ell (n-\ell)\rho |\omega|^2 .$$
	Therefore, we have $$\frac{1}{2}\Delta_f |\omega|^2\le-|\nabla \omega|^2 + \langle \Delta_f \omega, \omega \rangle +\kappa \ell (n-\ell)\rho |\omega|^2.$$
	Next, by applying this estimate for the form $|\omega|^{p-2}\omega$, we see that
	$$\frac{1}{2}\Delta_f |\omega|^{2(p-1)}\le-|\nabla (|\omega|^{p-2}\omega)|^2 +  \langle \Delta_f (|\omega|^{p-2}\omega), |\omega|^{p-2}\omega \rangle +\kappa \ell (n-\ell)\rho |\omega|^{2(p-1)}.$$
	By using the definition $d^*_f(|\omega|^{p-2}\omega)=0$ and the following fact, $$\Delta_f(\psi ^2) = 2 \psi \Delta_f \psi -2|\nabla \psi|^2,$$ 
    the above inequality can be written as follows
	\begin{align*}\nonumber
		|\omega|^{p-1}\Delta_f|\omega|^{p-1} 
&	\le |\nabla |\omega|^{p-1}|^2{-} |\nabla(|\omega|^{p-2}\omega)|^2 + \langle \Delta_f (|\omega|^{p-2}\omega), |\omega|^{p-2}\omega \rangle +\kappa \ell (n{-}\ell)\rho |\omega|^{2(p-1)}\\\nonumber
&\le \langle d^*_f d (|\omega|^{p-2}\omega), |\omega|^{p-2}\omega \rangle +\kappa \ell (n-\ell)\rho |\omega|^{2(p-1)},
	\end{align*}
	where in the last step, we use the Kato inequality and the definition of $\Delta_f$.
	Hence, this inequality implies that
	\begin{align*}
	|\omega| \Delta_f |\omega|^{p-1} \le \langle d^*_f d (|\omega|^{p-2}\omega), \omega \rangle +\kappa \ell (n-\ell) \rho(x)|\omega|^p.
\end{align*}
The proof of Lemma \ref{Boc} is complete.
\end{proof}

\section{Proof of main results}

First, we give a proof of Theorem \ref{th1}. Recall that in Theorem \ref{th1}, we assume that $$\lambda_1+\cdots+\lambda_{n-\ell}\ge 0	.$$ Therefore, we can use inequality \eqref{BocF} in Lemma \ref{Boc}  for the case $\kappa=0$.

\begin{proof}[Proof of Theorem \ref{th1}]
	Using the non-negative weighted curvature tensor assumption and Lemma \ref{Boc}, we have
	\begin{align}\label{Boc1}
			|\omega| \Delta_f |\omega|^{p-1} \le  \langle d^*_f d (|\omega|^{p-2}\omega), \omega \rangle.
	\end{align}
	Now, we fix a point $o$ in $M$ and choose a cut-off smooth function with compact support $\varphi$ on $M$ which satisfies $|\nabla \varphi|\le \frac{2}{r}$ and
\begin{align}\label{func}
		\varphi(x)= \begin{cases}
		1 \quad \text{on} \quad B_o(r)\\
		0 \quad \text{on} \quad M\setminus B_o(2r).
	\end{cases}
\end{align}	
\textbf{Case 1: $Q \ge p$}.  For $q=Q-p\ge 0$,	using inequality \eqref{Boc1} and the cut-off function $\varphi$ in \eqref{func}, we obtain the integral estimate as following
	\begin{align} \label{c11}
		\int_M \varphi ^2|\omega |^{q+1} \Delta_f |\omega |^{p-1}e^{-f} dv \le 
		\int_M \langle d(|\omega|^{p-2}\omega),d(\varphi^2 |\omega|^q\omega) \rangle e^{-f}dv.		
	\end{align}
Using the Stokes theorem and noting that $\varphi$ has compact support, we obtain the following inequality for the left-hand side.
\begin{align}\label{del1}  \nonumber
	\int_M {{\varphi ^2}|\omega |^{q+1}\Delta_f |\omega |^{p-1}} e^{-f}dv 
	&=\int_M \langle \nabla |\omega|^{p-1}, \nabla ( \varphi^2 |\omega|^{q+1}) \rangle e^{-f}dv \nonumber \\ 
		&\ge  -2(p-1)\int_M {\varphi |\omega |^{p+q-1}  | \nabla \varphi| |\nabla |\omega | | } e^{-f}dv\nonumber \\& \quad+ 
	(p-1)(q+1) \int_M {{\varphi ^2}|\omega|^{p+q-2} |\nabla |\omega |{|^2}}e^{-f}dv,
\end{align}	
where we use the fact that $|\langle u, v \rangle |\le |u|.|v|$ in the last step.

Next, we estimate the right-hand side by using Lemma \ref{d} as follows
\begin{align} \label{d1} \nonumber
	\int_M \langle d(|\omega|^{p-2}\omega),d(\varphi^2 |\omega|^q\omega) \rangle e^{-f}dv\nonumber
&\le \int_M |d|\omega|^{p-2}||\omega||d(\varphi^2 |\omega|^q)||\omega| e^{-f}dv\\\nonumber
	&= \int_M |\nabla|\omega|^{p-2}||\omega||\nabla(\varphi^2 |\omega|^q)||\omega| e^{-f}dv\\ 
	&\le 2|p-2| \int_M \varphi |\omega|^{p+q-1}|\nabla \varphi||\nabla|\omega||e^{-f}dv \nonumber\\ & \quad+ 
	|p-2|q \int_M  \varphi^2 |\omega|^{p+q-2}|\nabla|\omega||^2 e^{-f}dv.
\end{align}
	Substituting two above inequalities into inequality \eqref{c11}, we have
	\begin{align}\nonumber
		A_1 \int_M \varphi^2 |\omega|^{Q-2}|\nabla|\omega||^2 e^{-f}dv \le B_1 \int_M \varphi |\omega|^{Q-1} |\nabla \varphi||\nabla|\omega||e^{-f}dv,
	\end{align}
	where we denote
	\begin{align*}
		&A_1=(p-1)(1+Q-p)-|p-2|(Q-p)\\
		&B_1=2(p-1)+2|p-2|.
	\end{align*}
	For any positive constant $\epsilon$, applying the Cauchy-Schwarz inequality for the right-hand side, the above inequality implies that
	\begin{align}\nonumber
		\left(A_1-\frac{\epsilon B_1}{2}\right) \int_M \varphi^2 |\omega|^{Q-2}|\nabla|\omega||^2 e^{-f}dv \le \frac{B_1}{2\epsilon} \int_M |\omega|^{Q} |\nabla \varphi|^2e^{-f}dv.
	\end{align}
	Note that since $A_1>0$, we can choose $\epsilon$ such that $A_1-\frac{\epsilon B_1}{2}>0$. Hence, by using the definition of $\varphi$, the above inequality leads to
		\begin{align}
	0\le	\left(A_1-\frac{\epsilon B_1}{2}\right) \int_{B_o(r)}  |\omega|^{Q-2}|\nabla|\omega||^2 e^{-f}dv \le \frac{2B_1}{\epsilon r^2} \int_{B_o(2r)} |\omega|^{Q} e^{-f}dv.
	\end{align}
	Now, by letting $r \to \infty$, this inequality deduces to $|\omega|$ is a constant. Combining with the assumption on infinite weighted volume of $M$, we conclude that $\omega =0$.
	
	\textbf{Case 2: $p-1<Q<p$}. Let $q=Q-p$ then $-1<q<0$. Then, inequality \eqref{Boc1} leads to
	\begin{align}\label{c12}
		\int\limits_M \varphi ^2|\omega |(|\omega|+\delta)^q \Delta_f |\omega |^{p-1} e^{-f} dv \le 
		\int_M \langle d(|\omega|^{p-2}\omega),d(\varphi^2 (|\omega|+\delta)^q\omega) \rangle  e^{-f}dv.
	\end{align}
	Next, we apply the Stokes theorem for the left-hand side and Lemma \ref{d} for the right-hand side to obtain the following estimates.
	\begin{align}\label{del2}\nonumber
		\int\limits_M {{\varphi ^2}|\omega |(|\omega|+\delta)^q\Delta_f |\omega |^{p-1}}e^{-f}dv \nonumber
			&=\int_M \langle \nabla |\omega|^{p-1}, \nabla ( \varphi^2 |\omega|(|\omega|+\delta)^q) \rangle e^{-f}dv\\ \nonumber
		&=   2(p{-}1)\int\limits_M {\varphi |\omega |^{p-1} (|\omega|+\delta)^q  \langle \nabla \varphi ,\nabla |\omega | \rangle }e^{-f}dv\\
         &\quad + (p{-}1) \int\limits_M {{\varphi ^2}|\omega|^{p-2} (|\omega|+\delta)^q |\nabla |\omega |{|^2}}e^{-f}dv  \nonumber\\ \nonumber
		&
		\quad+(p-1) q\int\limits_M {{\varphi ^2}|\omega|^{p-1} (|\omega|+\delta)^{q-1} |\nabla |\omega |{|^2}}e^{-f}dv\\\nonumber
		&\ge  -2(p-1)\int\limits_M {\varphi |\omega |^{p-1} (|\omega|+\delta)^q  | \nabla \varphi| |\nabla |\omega | | }e^{-f} dv \\ &\nonumber \quad  +
		(p-1) \int\limits_M {{\varphi ^2}|\omega|^{p-2} (|\omega|+\delta)^q |\nabla |\omega |{|^2}}e^{-f}dv\\ \nonumber
		& \quad
		+(p-1) q\int\limits_M {{\varphi ^2}|\omega|^{p-1} (|\omega|+\delta)^{q-1} |\nabla |\omega |{|^2}}e^{-f}dv\\ 
		&\ge  -2(p-1)\int\limits_M {\varphi |\omega |^{p-1} (|\omega|+\delta)^q  | \nabla \varphi| |\nabla |\omega | | }e^{-f} dv \notag \\ &\quad + 
		(p-1)(1+q) \int\limits_M {{\varphi ^2}|\omega|^{p-2} (|\omega|+\delta)^q |\nabla |\omega |{|^2}}e^{-f}dv,
	\end{align}
	and
	\begin{align}\label{d2}\nonumber
		&\int_M \langle d(|\omega|^{p-2}\omega),d(\varphi^2 (|\omega|+\delta)^q\omega) \rangle e^{-f}dv \\\nonumber
		&\le \int_M |d|\omega|^{p-2}||\omega||d(\varphi^2 (|\omega|+\delta)^q)||\omega|e^{-f}dv\\\nonumber
		&\le  \int_M |p-2||\omega|^{p-1} |\nabla |\omega|| \left[ 2\varphi (|\omega|+\delta)^q |\nabla \varphi|+|q| \varphi^2 (|\omega|+\delta)^{q-1} |\nabla |\omega||  \right] e^{-f} dv\\\nonumber
		&\le 2|p-2| \int_M \varphi |\omega|^{p-1} (|\omega|+\delta)^{q}  |\nabla \varphi||\nabla|\omega||e^{-f}dv\\
		& \quad + |p-2||q| \int_M  \varphi^2 |\omega|^{p-2} (|\omega|+\delta)^{q} |\nabla|\omega||^2 e^{-f}dv.
	\end{align}
	Here, we use the fact that $\varphi^2 |\omega|^{p-1} (|\omega|+\delta)^{q-1} |\nabla|\omega||^2\le \varphi^2 |\omega|^{p-2} (|\omega|+\delta)^{q} |\nabla|\omega||^2$ in the last steps of the above inequalities. Combining these estimates and \eqref{c12}, we see that
	\begin{align}\nonumber
		A_2 \int_M \varphi^2 |\omega|^{p-2} (|\omega|+\delta)^{Q-p}|\nabla|\omega||^2 e^{-f}dv \le B_2 \int_M \varphi |\omega|^{p-1} (|\omega|+\delta)^{Q-p}|\nabla \varphi||\nabla|\omega||e^{-f}dv,
	\end{align}
	where
	\begin{align*}
		&A_2=(p-1)(1+Q-p)-|p-2||Q-p|,\\
		&B_2=2(p-1)+2|p-2|.
	\end{align*}
	Therefore, for any $\epsilon>0$, applying the Cauchy-Schwarz inequality and noting that $(|\omega|+\delta)^{Q-p}\le |\omega|^{Q-p}$, we have
	\begin{align}\nonumber
		\left(A_2-\frac{\epsilon B_2}{2}\right) \int_M \varphi^2 |\omega|^{p-2}(|\omega|+\delta)^{Q-p}|\nabla|\omega||^2 e^{-f}dv &\le \frac{B_2}{2\epsilon} \int_M |\omega|^{p} (|\omega|+\delta)^{Q-p}|\nabla \varphi|^2e^{-f}dv\\ \nonumber
				&\le \frac{B_2}{2\epsilon} \int_M |\omega|^{Q} |\nabla \varphi|^2e^{-f}dv.
	\end{align}
Using the assumption $A_2>0$, we can use the same arguments as those in Case 1 to obtain $\omega=0$. Hence, the proof of Theorem \ref{th1} is complete.	
\end{proof}

Next, we prove Theorem \ref{th2}, where we assume the manifold satisfies the weighted Poincar\'e inequality \eqref{Poi} with the weighted function $\rho(x)$ and the eigenvalues satisfies $$\lambda_1+\cdots+\lambda_{n-\ell}\ge -\kappa(n-\ell)\rho(x).$$	
\begin{proof}[Proof of Theorem \ref{th2}]
	Recall that by Lemma \ref{Boc} and the curvature assumption, we have
	\begin{align} \label{Boc2}
		|\omega| \Delta_f |\omega|^{p-1} \le \langle d^*_f d (|\omega|^{p-2}\omega), \omega \rangle +\kappa \ell (n-\ell) \rho(x)|\omega|^p,
	\end{align}
	Similar to the proof of Theorem \ref{th1}, we consider two cases based on the range of $Q$.
	
	\textbf{Case 1: $Q \ge p$}.  For $q=Q-p\ge 0$,	inequality \eqref{Boc2} implies to
	\begin{align} \label{c21}\nonumber
		\int_M \varphi ^2|\omega |^{q+1} \Delta_f |\omega |^{p-1}e^{-f} dv &\le 
		\int_M \langle d(|\omega|^{p-2}\omega),d(\varphi^2 |\omega|^q\omega) \rangle e^{-f}dv\\
	&	\quad+\kappa\ell(n-\ell)\int_M \rho(x)\varphi^2 |\omega
		|^{Q}e^{-f}dv.		
	\end{align}
	In order to find an upper bound for the second term in the right-hand side, we use the weighted Poincar\'e inequality \eqref{Poi} as follows
	\begin{align*}
		\int_M \rho\varphi^2 |\omega
		|^{Q}e^{-f}dv&\le   \int_M \left| \nabla \left( \varphi |\omega|^{\frac{Q}{2}} \right) \right|^2 e^{-f}dv\\ \nonumber
		&=  \int_M \left[|\omega {|^{Q}}|\nabla \varphi {|^2}  + 
		\frac{Q^2 }{4}{\varphi ^2}|\omega|^{Q-2} |\nabla |\omega |{|^2}+ 
		Q {\varphi |\omega |^{Q-1} \langle \nabla \varphi ,\nabla |\omega | \rangle }\right]e^{-f}dv\\
		&\le  \int_M \left[|\omega {|^{Q}}|\nabla \varphi {|^2}  + 
		\frac{Q^2 }{4}{\varphi ^2}|\omega|^{Q-2} |\nabla |\omega |{|^2}+ 
		Q {\varphi |\omega |^{Q-1} | \nabla \varphi| |\nabla |\omega || }\right]e^{-f}dv.	
	\end{align*}
	Substituting this inequality and inequalities \eqref{del1}-\eqref{d1} into inequality \eqref{c21}, and letting 
		\begin{align*}
		&A_3=(p-1)(1+Q-p)-|p-2|(Q-p)-\frac{\kappa \ell (n-\ell)Q^2}{4}\\
		&B_3=2(p-1)+2|p-2|+\kappa \ell (n-\ell)Q,
	\end{align*}
	we obtain
	\begin{align}\nonumber
		A_3 \int_M \varphi^2 |\omega|^{Q-2}|\nabla|\omega||^2 e^{-f}dv &\le B_3 \int_M \varphi |\omega|^{Q-1} |\nabla \varphi||\nabla|\omega||e^{-f}dv\\ \nonumber
		&\quad+\kappa \ell (n-\ell) \int_M |\omega|^Q |\nabla \varphi|^2 e^{-f}dv.
	\end{align}
	For any positive constant $\epsilon$, the Cauchy-Schwarz inequality and the above estimate lead to
	\begin{align} \nonumber
		\left(A_3-\frac{B_3 \epsilon}{2}\right) \int_M \varphi^2 |\omega|^{Q-2}|\nabla|\omega||^2 e^{-f}dv \le \left(\kappa \ell (n-\ell)+\frac{B_3 }{2\epsilon} \right) \int_M |\omega|^Q |\nabla \varphi|^2 e^{-f}dv.
	\end{align}
	Note that if a weighted Poincar\'e inequality holds on $M$ then $\rm{Vol}_f(M)=\infty$ (see Lemma 3.4 in \cite{DDH25}).  Hence, since $A_3>0$, we can repeat the arguments in the proof of Theorem \ref{th1} to obtain $\omega$ is trivial.
	
	\textbf{Case 2: $p-1<Q<p$}. Denote by $q=Q-p$, then $-1<q<0$. Using the cut-off function defined in \eqref{func} and inequality \eqref{Boc2}, we see that
	\begin{align}\label{c22}\nonumber
		\int\limits_M \varphi ^2|\omega |(|\omega|+\delta)^q \Delta_f |\omega |^{p-1} e^{-f} dv &\le 
		\int_M \langle d(|\omega|^{p-2}\omega),d(\varphi^2 (|\omega|+\delta)^q\omega) \rangle e^{-f}dv\\
		&\quad +\kappa\ell(n-\ell)\int_M \rho(x)\varphi^2 |\omega|^{p}(|\omega|+\delta)^{q}e^{-f}dv.
	\end{align}
	Now, we estimate the second term in the right-hand side by using the weighted Poincar\'e inequality \eqref{Poi} as follows.
	\begin{align}\label{rh} \nonumber
		&\int\limits_M \rho\varphi ^2|\omega |^{p}(|\omega|{+}\delta)^q e^{-f}dv\\\nonumber
		&\le \int_M \left|\nabla \left( \varphi |\omega|^{\frac{p}{2}}(|\omega|{+}\delta)^{\frac{q}{2}} \right)\right|^2 e^{-f}dv\\\nonumber
		&=\int_M \left\{ |\omega|^p (|\omega|{+}\delta)^q |\nabla \varphi|^2 +\varphi^2 |\omega|^{p-2}(|\omega|{+}\delta)^{q-2} |\nabla |\omega||^2 \left[ \frac{p}{2}(|\omega|{+}\delta)+\frac{q}{2}|\omega|  \right]^2 \right\}e^{-f}dv\\\nonumber
		&\qquad + \int_M |\omega|^{p-1}\varphi (|\omega|{+}\delta)^{q-1} \langle \nabla \varphi, \nabla |\omega| \rangle \left[p(|\omega|{+}\delta)+q|\omega|\right]e^{-f}dv\\\nonumber
		&\le \int_M \left\{ |\omega|^p (|\omega|{+}\delta)^q |\nabla \varphi|^2 +\varphi^2 |\omega|^{p-2}(|\omega|{+}\delta)^{q-2} |\nabla |\omega||^2 \left[ \frac{p}{2}(|\omega|{+}\delta)+\frac{q}{2}|\omega|  \right]^2 \right\}e^{-f}dv\\\nonumber
		&\qquad+ \int_M \varphi|\omega|^{p-1} (|\omega|{+}\delta)^{q-1} |\nabla \varphi| |\nabla |\omega||  \left[p(|\omega|{+}\delta)+q|\omega|\right]e^{-f}dv\\\nonumber
		&\le \int_M \left\{ |\omega|^p (|\omega|{+}\delta)^q |\nabla \varphi|^2 +\frac{(p+|q|)^2}{4}\varphi^2 |\omega|^{p-2}(|\omega|{+}\delta)^{q} |\nabla |\omega||^2  \right\}e^{-f}dv\\
		&\qquad + (p+|q|)\int_M \varphi |\omega|^{p-1} (|\omega|{+}\delta)^{q} |\nabla \varphi| |\nabla |\omega|| e^{-f}dv.			
	\end{align}
	Here, in the last step, we use the fact that  $0< p(|\omega|+\delta)+q|\omega|  \le (p+|q|)(|\omega|+\delta)$ for the last two terms in the right-hand side.	Combining this estimate and inequalities \eqref{del2}-\eqref{d2}, and then letting
	\begin{align*}
		&A_4=(p-1)(1+Q-p)-|p-2||Q-p|-\frac{\kappa \ell (n-\ell)(p+|q|)^2}{4},\\
		&B_4=2(p-1)+2|p-2|+\kappa \ell (n-\ell)(p+|q|),
	\end{align*}
	we obtain the following inequality
	\begin{align*}\nonumber
		A_4 \int_M \varphi^2 |\omega|^{p-2} (|\omega|+\delta)^{Q-p}|\nabla|\omega||^2 e^{-f}dv & \le B_4 \int_M \varphi |\omega|^{p-1} (|\omega|+\delta)^{Q-p}|\nabla \varphi||\nabla|\omega||e^{-f}dv\\
		&\qquad +\kappa \ell (n-\ell)\int_M |\omega|^p (|\omega|+\delta)^q |\nabla \varphi|^2 e^{-f}dv.
	\end{align*}
	For a constant $\epsilon>0$, applying the Cauchy-Schwarz inequality for the first term in the right-hand side, the above estimate leads to
	\begin{align}\nonumber
		&\left(A_4-\frac{\epsilon B_4}{2}\right) \int_M \varphi^2 |\omega|^{p-2}(|\omega|+\delta)^{Q-p}|\nabla|\omega||^2 e^{-f}dv \\ \nonumber
		&\qquad\qquad\le \left( \frac{B_4}{2\epsilon}+\kappa \ell (n-\ell) \right)\int_M |\omega|^{p} (|\omega|+\delta)^{Q-p}|\nabla \varphi|^2e^{-f}dv\\ \nonumber
		&\qquad\qquad\le \left( \frac{B_4}{2\epsilon}+\kappa \ell (n-\ell) \right) \int_M |\omega|^{Q} |\nabla \varphi|^2e^{-f}dv.
	\end{align}
	Observe that  $\rm{Vol}_f(M)=\infty$. Therefore, similar to the proof of Theorem \ref{th1}, this estimate and the assumption $A_4>0$ yield that $\omega$ vanishes on $M$. We complete the proof of Theorem \ref{th2}.
\end{proof}
	
	Finally, we prove the vanishing result when $M$ satisfies a Sobolev inequality \eqref{Sob} and the eigenvalues satisfies $$\lambda_1+\cdots+\lambda_{n-\ell}\ge -\kappa(n-\ell)\rho(x).$$	
\begin{proof}[Proof of Theorem \ref{th3}]
	By the assumption on the eigenvalues, we can use the first part in the proof of Theorem \ref{th2}.
	
	\textbf{Case 1: $Q\ge p$}. Recall that in this case, we have inequality \eqref{c21}. Next, we apply the weighted Sobolev inequality \eqref{Sob} to estimate the last term in \eqref{c21} as follows.
	\begin{align*}
		&\int_M  \rho\varphi^2  |\omega|^{Q}e^{-f}dv\\
		&\le \left(\int_M \rho^{\frac{n}{2}}e^{-f}dv\right)^{\frac{2}{n}} \left[ \int_M \left(\varphi |\omega|^{\frac{Q}{2}}\right)^{\frac{2n}{n-2}}e^{-f}dv \right]^{\frac{n-2}{n}}\\
		&\le \|\rho\|_{\frac{n}{2}} C_S  \int_M \left|\nabla\left(\varphi |\omega|^{\frac{Q}{2}}\right)\right|^2 e^{-f}dv \\
		&	\le \|\rho\|_{\frac{n}{2}} C_S \int_M \left[|\omega {|^{Q}}|\nabla \varphi {|^2}  + 
		\frac{Q^2 }{4}{\varphi ^2}|\omega|^{Q-2} |\nabla |\omega |{|^2}+ 
		Q {\varphi |\omega |^{Q-1} | \nabla \varphi| |\nabla |\omega || }\right]e^{-f}dv,
	\end{align*}
	where we use the Holder inequality in the first step. This estimate and inequalities \eqref{del1}-\eqref{d1}, \eqref{c21} lead to
	\begin{align}\nonumber
		A_5 \int_M \varphi^2 |\omega|^{Q-2}|\nabla|\omega||^2 e^{-f}dv &\le B_5 \int_M \varphi |\omega|^{Q-1} |\nabla \varphi||\nabla|\omega||e^{-f}dv\\ \nonumber
		&\quad+\kappa \ell (n-\ell) C_s \|\rho\|_{\frac{n}{2}} \int_M |\omega|^Q |\nabla \varphi|^2 e^{-f}dv.
	\end{align}
	Here, we denote by 
	\begin{align*}
		&A_5=(p-1)(1+Q-p)-|p-2|(Q-p)-\frac{\kappa \ell (n-\ell)C_s \|\rho\|_{\frac{n}{2}} Q^2}{4}\\
		&B_5=2(p-1)+2|p-2|+\kappa \ell (n-\ell)C_s \|\rho\|_{\frac{n}{2}} Q.
	\end{align*}
	Hence, we have the following estimate
		\begin{align} \nonumber
		\left(A_5-\frac{B_5 \epsilon}{2}\right) \int_M \varphi^2 |\omega|^{Q-2}|\nabla|\omega||^2 e^{-f}dv \le \left(\kappa \ell (n-\ell)C_s \|\rho\|_{\frac{n}{2}}+\frac{B_5 }{2\epsilon} \right) \int_M |\omega|^Q |\nabla \varphi|^2 e^{-f}dv,
	\end{align}
	for any positive constant $\epsilon$.
	
	Moreover, by $M$ satisfies a weighted Sobolev inequality, we see that $M$ has infinite weighted volume. Then, similar to Theorem \ref{th2}, the above estimate and the assumption on $A_5>0$ deduce to $\omega=0$.
	
	\textbf{Case 2: $p-1<Q<p$.} For $q=Q-p$, we obtain inequality \eqref{c22}. Using the Holder inequality, the weighted Sobolev inequality, we have
	\begin{align*}
				&\int\limits_M \rho\varphi ^2|\omega |^{p}(|\omega|+\delta)^q e^{-f}dv\\
				&\le \left(\int_M \rho^{\frac{n}{2}}e^{-f}dv\right)^{\frac{2}{n}} \left[ \int_M \left(\varphi |\omega|^{\frac{p}{2}}(|\omega|+\delta)^{\frac{q}{2}}\right)^{\frac{2n}{n-2}}e^{-f}dv \right]^{\frac{n-2}{n}}\\
		&\le \|\rho\|_{\frac{n}{2}} C_S  \int_M \left|\nabla \left( \varphi |\omega|^{\frac{p}{2}}(|\omega|+\delta)^{\frac{q}{2}} \right)\right|^2 e^{-f}dv\\
			&\le \|\rho\|_{\frac{n}{2}} C_S  \int_M \left\{ |\omega|^p (|\omega|+\delta)^q |\nabla \varphi|^2 +\frac{(p+|q|)^2}{4}\varphi^2 |\omega|^{p-2}(|\omega|+\delta)^{q} |\nabla |\omega||^2  \right\}e^{-f}dv\\
			&\quad + \|\rho\|_{\frac{n}{2}} C_S(p+|q|)\int_M \varphi |\omega|^{p-1} (|\omega|+\delta)^{q} |\nabla \varphi| |\nabla |\omega|| e^{-f}dv.
	\end{align*}
	Therefore, by combining this inequality and inequalities \eqref{del2}-\eqref{d2}, \eqref{c22}, we obtain
		\begin{align*}\nonumber
		A_6 \int_M \varphi^2 |\omega|^{p-2} (|\omega|+\delta)^{Q-p}|\nabla|\omega||^2 e^{-f}dv \le &B_6 \int_M \varphi |\omega|^{p-1} (|\omega|+\delta)^{Q-p}|\nabla \varphi||\nabla|\omega||e^{-f}dv\\
		&\qquad +\kappa \ell (n-\ell)\|\rho\|_{\frac{n}{2}} C_S\int_M |\omega|^p (|\omega|+\delta)^q |\nabla \varphi|^2 e^{-f}dv,
	\end{align*}
	where
	\begin{align*}
		&A_6=(p-1)(1+Q-p)-|p-2||Q-p|-\frac{\kappa \ell (n-\ell)\|\rho\|_{\frac{n}{2}} C_S(p+|q|)^2}{4},\\
		&B_6=2(p-1)+2|p-2|+\kappa \ell (n-\ell)\|\rho\|_{\frac{n}{2}} C_S(p+|q|).
	\end{align*}
	This estimate leads to
		\begin{align}\nonumber
		&\left(A_6-\frac{\epsilon B_6}{2}\right) \int_M \varphi^2 |\omega|^{p-2}(|\omega|+\delta)^{Q-p}|\nabla|\omega||^2 e^{-f}dv \\ \nonumber
		&\qquad\qquad\le \left( \frac{B_6}{2\epsilon}+\kappa \ell (n-\ell) \|\rho\|_{\frac{n}{2}} C_S\right)\int_M |\omega|^{p} (|\omega|+\delta)^{Q-p}|\nabla \varphi|^2e^{-f}dv\\ \nonumber
		&\qquad\qquad\le \left( \frac{B_6}{2\epsilon}+\kappa \ell (n-\ell)\|\rho\|_{\frac{n}{2}} C_S \right) \int_M |\omega|^{Q} |\nabla \varphi|^2e^{-f}dv.
	\end{align}
	By the positivity of $ A_6$, similar to the above cases, this inequality implies the vanishing property of $\omega$. The proof of Theorem \ref{th3} is complete.
\end{proof}
	\vskip0.1cm
\noindent
{\bf Acknowledgment:}   We would like to express our thanks to Prof. Nguyen Thac Dung for many useful suggestions. The authors thank Sourav Nayak for fruitful discussions. A part of this paper was completed during a stay of the first
author  at Vietnam Institute for Advanced Study in Mathematics (VIASM). We would like to express our thanks to the staff there for the hospitality. The first author  was supported by NAFOSTED under grant number 101.02-2025.62.

\section*{\vskip-10mm\noindent Declarations}

\begin{itemize}

\item\vskip-1.5mm {Conflict of interest:} The authors declare no conflicts of interest.

\item\vskip-1.5mm {Ethics approval and consent to participate:} The work is original, not under consideration elsewhere, and approved by all authors.

\item\vskip-1.5mm {Author contribution:} All authors contributed equally to this work.

\item\vskip-1.5mm {Data availability statement:}
The manuscript has no associated data.
\end{itemize}

\address{ {\it  Dang Tuyen Nguyen}\\
	Department of Mathematics,\\ 
	Hanoi University of Civil Engineering,\\
	Hanoi, Vietnam.
}
{tuyennd@huce.edu.vn}

\address{{\it Dhriti Sundar Patra}\\
	Department of Mathematics,\\
	Indian Institute of Technology Hyderabad,\\
	Kandi-502284, Sangareddy
	Telangana, India.
}
{dhriti@math.iith.ac.in}
%

\end{document}